\documentclass[a4paper,12pt]{amsart}

\usepackage{amssymb, enumitem,mathtools}
\usepackage{color}

\usepackage[breaklinks=true,colorlinks=true,linkcolor=green,citecolor=red,urlcolor=blue]{hyperref}

\allowdisplaybreaks

\textheight 22cm
\textwidth 17cm %narrower?

\oddsidemargin 0.1cm
\evensidemargin 0.1cm

\newcommand \al{\alpha}
\newcommand \be{\beta}

\newcommand \la{\lambda}

\newcommand \br{\mathbb{R}}
\newcommand \bc{\mathbb{C}}

\newcommand \Span{\operatorname{Span}}
\newcommand \id{\operatorname{id}}
\newcommand \<{\langle}
\renewcommand \>{\rangle}
\newcommand \ip{\< \cdot, \cdot \>}

\newcommand \Tr{\operatorname{Tr}}
\newcommand \rk{\operatorname{rk}}

\newcommand \tM{\overline{M}}

\theoremstyle{plain}

\newtheorem*{theorem*}{Theorem}
\newtheorem*{corollary*}{Corollary}

\newtheorem*{conj*}{Conjecture}

\newtheorem*{lemma*}{Lemma}

\newtheorem*{prop*}{Proposition}

\theoremstyle{definition}

\newtheorem*{definition*}{Definition}

\theoremstyle{remark}

\makeatletter
\@namedef{subjclassname@2020}{%
  \textup{2020} Mathematics Subject Classification}
\makeatother

\begin{document}

\title{On 2-stein submanifolds in space forms}

\author{Yunhee Euh}
\address{Department of Mathematics, Sungkyunkwan University, Suwon, 16419, Korea}
\email{prettyfish@skku.edu}

\author{Jihun Kim}
\address{Department of Mathematics, Sungkyunkwan University, Suwon, 16419, Korea}
\email{jihunkim@skku.edu}

\author{Yuri Nikolayevsky}
\address{Department of Mathematics and Statistics, La Trobe University, Melbourne, Victoria, 3086, Australia}
\email{y.nikolayevsky@latrobe.edu.au}

\author{JeongHyeong Park}
\address{Department of Mathematics, Sungkyunkwan University, Suwon, 16419, Korea}
\email{parkj@skku.edu}

\thanks {The first and fourth authors were supported by Samsung Science and Technology Foundation under Project\\
\indent Number SSTF-BA2001-03. \\
\indent The third author was partially supported by ARC Discovery grant DP210100951.}
\subjclass[2020]{Primary 53C25, 53B25} %; Secondary
\keywords{2-stein space, submanifold in a space form}

% 53C25  Special Riemannian manifolds (Einstein, Sasakian, etc.)
% 53B20  Local Riemannian geometry
% 53B25  Local submanifolds

%\date{}

\begin{abstract}
We prove that a $2$-stein submanifold in a space form whose normal connection is flat or whose codimension is at most $2$, has constant curvature.
\end{abstract}

\maketitle

\section{Introduction}
\label{s:intro}

Einstein hypersurfaces in spaces of constant curvature are well known: by \cite[Theorem~7.1]{Fia}, any such (connected) hypersurface is either totally umbilical (in particular, totally geodesic), or is of conullity $1$, or is an open domain of the product of two spheres of the same Ricci curvature in the sphere. Departing from this result, one may either consider Einstein hypersurfaces in ``more complicated" spaces, or otherwise, relax the codimension one condition.

Results in the first direction include classification of Einstein hypersurfaces in rank-one symmetric spaces (by six (!) groups of authors, including the third and the fourth author). The answer is that such hypersurfaces only exist in quaternionic projective spaces and in the Cayley projective plane and are geodesic spheres of particular radii. The third and the fourth author further classified Einstein hypersurfaces in irreducible symmetric spaces: in higher rank spaces, such hypersurfaces are either Einstein codimension $1$ solvmanifolds in spaces of noncompact type, or some very special hypersurfaces in the Wu manifold $\mathrm{SU}(3)/\mathrm{SO}(3)$ or in its noncompact dual $\mathrm{SL}(3)/\mathrm{SO}(3)$ \cite{NP}.

On the other hand, for submanifolds of higher codimension in a space form, the Einstein condition alone may be too weak for obtaining a meaningful classification, and one needs to impose extra restrictions of either extrinsic or intrinsic nature (see e.g. \cite{DOV} and many results in \cite{DT}).

% even a surprise appearance in [earlier Jeong-YN]
In this paper we impose, as such a restriction, the $2$-stein condition. A Riemannian manifold is called \emph{$2$-stein}, if at every point, both the trace of its Jacobi operator and the trace of the squared Jacobi operator are constant on the unit tangent sphere. The $2$-stein condition, being quite algebraically restrictive, is still not fully understood. Algebraically, it roughly determines ``a half" of the curvature tensor in higher dimensions. Spaces of constant curvature, rank-one symmetric spaces and Damek-Ricci spaces (Einstein solvmanifolds of rank one constructed via Clifford modules \cite{BTV}) are examples of $2$-stein manifolds. There are many $2$-stein symmetric spaces (see the classification in \cite{CGW}). Any \emph{harmonic} space is $2$-stein (but not vice versa). Dimension $4$ is special for $2$-stein manifolds (see the first paragraph of Section~\ref{s:pre}); in dimension $5$, any $2$-stein manifold is either a space form, or the Wu manifold $\mathrm{SU}(3)/\mathrm{SO}(3)$ or its noncompact dual $\mathrm{SL}(3)/\mathrm{SO}(3)$ \cite[Proposition~1]{Nik}.

%Clearly need more restrictions (eg 2-stein space can isometrically immerse somewhere).
We prove the following.
\begin{theorem*} %\label{th:2st}
  Let $M^n, \; n \ge 3$, be a $2$-stein, connected submanifold in a space $\tM^{n+p}(c), \; p \ge 1$, of constant curvature $c$. Suppose one of the following two assumptions is satisfied.
    \begin{enumerate}[label=\emph{(\alph*)},ref=\alph*]
    \item \label{it:thflat}
    The normal connection of the submanifold $M^n \subset \tM^{n+p}(c)$ is flat.

    \item \label{it:thcod2}
    The codimension $p$ equals $2$.
  \end{enumerate}
  Then $M^n$ has constant curvature.
\end{theorem*}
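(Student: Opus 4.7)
The plan is to translate the 2-stein hypothesis into polynomial identities on the shape operators $A_\al$ of $M \subset \tM^{n+p}(c)$ via the Gauss equation. For a tangent vector $X$ the Jacobi operator reads
\begin{equation*}
\rJ(X) Y = c\bigl(|X|^2 Y - \<X, Y\> X\bigr) + \sum_\al \bigl(\<A_\al X, X\> A_\al Y - \<A_\al X, Y\> A_\al X\bigr).
\end{equation*}
The Einstein condition $\Tr \rJ(X) = \la_1 |X|^2$ is equivalent to the operator identity $\sum_\al (H_\al A_\al - A_\al^2) = \mu \id$, with $H_\al = \Tr A_\al$ and $\mu = \la_1 - (n-1)c$; the second 2-stein condition $\Tr \rJ(X)^2 = \la_2 |X|^4$ is a quartic polynomial identity in the coordinates of $X$, producing one equation per coefficient.

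For case (\ref{it:thflat}) I would apply Ricci's theorem to simultaneously diagonalise all $A_\al$ in a single orthonormal tangent frame $\{e_i\}$, writing $A_\al e_i = k_i^\al e_i$ and $\vec k_i = (k_i^\al)_\al \in \br^p$. The Gauss equation then gives sectional curvatures $K_{ij} = c + \<\vec k_i, \vec k_j\>$ and shows $R_{ijkl} = 0$ whenever $\{i,j\} \ne \{k,l\}$. Consequently, for $X = \sum_i x_i e_i$, the Jacobi matrix is $\rJ(X)_{jj} = \sum_{i \ne j} x_i^2 K_{ij}$ and $\rJ(X)_{jk} = -x_j x_k K_{jk}$ for $j \ne k$, so equating the coefficients of $x_a^4$ and $x_a^2 x_b^2$ on the two sides of $\Tr \rJ(X)^2 = \la_2 |X|^4$ yields
\begin{equation*}
\sum_{j \ne a} K_{aj}^2 = \la_2, \qquad K_{ab}^2 + \sum_{j \ne a, b} K_{aj} K_{bj} = \la_2.
\end{equation*}
Subtracting and symmetrising over $a \leftrightarrow b$ gives $\sum_{j \ne a, b} (K_{aj} - K_{bj})^2 = 0$, so $K_{aj}$ is independent of $a$ whenever $n \ge 4$; hence all sectional curvatures coincide. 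The case $n = 3$ is immediate from $2$-stein $\Rightarrow$ Einstein together with the classical fact that an Einstein $3$-manifold has constant sectional curvature.

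For case (\ref{it:thcod2}) the goal is to reduce to case (\ref{it:thflat}) by showing that the normal connection must be flat. If not, then $C := [A_1, A_2] \ne 0$ on some open set. Commuting the Einstein identity $H_1 A_1 + H_2 A_2 - A_1^2 - A_2^2 = \mu \id$ successively with $A_1$ and $A_2$ produces $A_2 C + C A_2 = H_2 C$ and $A_1 C + C A_1 = H_1 C$, so the centred operators $\tilde A_\al := A_\al - \tfrac{H_\al}{2} \id$ anticommute with $C$. This symmetry pairs the eigenvalues of each $A_\al$ by the reflection $\la \mapsto H_\al - \la$ and yields a canonical block decomposition of $T_p M$ in which $A_1, A_2$ take a simple form. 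Substituting this form into the quartic identity $\Tr \rJ(X)^2 = \la_2 |X|^4$ and extracting coefficient equations should produce a system incompatible with $C \ne 0$, delivering the required contradiction.

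The main obstacle is this last step. The Einstein identity alone only gives the above anticommutation relations, and to rule out $C \ne 0$ one has to exploit the genuinely quartic part of the $2$-stein identity, beyond its Einstein consequence. Selecting the right coefficient equations in a frame adapted to $(\tilde A_1, \tilde A_2, C)$, and using codimension $2$ essentially---the normal bundle being a rank-one $SO(2)$-principal bundle whose curvature is essentially the scalar $[A_1, A_2]$---is the delicate algebraic core of the argument.
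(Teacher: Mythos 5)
Your treatment of case (\ref{it:thflat}) is correct and follows essentially the same route as the paper: simultaneous diagonalisation of the shape operators, then extraction of the coefficients of $x_a^4$ and $x_a^2x_b^2$ from the quartic identity. Your endgame (symmetrising to get $\sum_{j\ne a,b}(K_{aj}-K_{bj})^2=0$ and concluding directly that all coordinate-plane curvatures coincide) differs in detail from the paper's, which instead derives $(n-1)c_2'=(c_1')^2$ and applies Cauchy--Schwarz to the extrinsic Jacobi operator; both are valid, and in fact your computation already covers $n=3$ without the separate appeal to the Einstein condition.

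Case (\ref{it:thcod2}), however, contains a genuine gap, and it sits exactly where the real work of the theorem lies. The anticommutation relations $\tilde A_\alpha C + C\tilde A_\alpha=0$ that you derive come from the trace-level part of the $2$-stein hypothesis alone (the Einstein identity), and they are far from sufficient: you then assert that substituting the resulting block form into $\Tr(\rJ(X)^2)=\lambda_2|X|^4$ ``should produce a system incompatible with $C\ne0$,'' which is precisely the claim that has to be proved. The paper does this as follows. From $(Q^1)^2+(Q^2)^2=c_3\,\id$ (with $Q^\sigma=A^\sigma-\tfrac12H^\sigma\id$) it proves a linear-algebra lemma putting $A^1,A^2$ simultaneously into block-diagonal form with $1\times1$ blocks and genuinely non-commuting $2\times2$ blocks of the specific shape \eqref{eq:22blocks}; it then assumes a $2\times2$ block exists, normalises $H^2=0$ by rotating the normal frame, and splits into the cases $H^1\ne0$ and $H^1=0$, the latter subdivided according to whether there are at least two $2\times2$ blocks or exactly one. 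In each case it extracts particular coefficients of the quartic identity (of $x_1^3x_2$, $x_1^2x_2^2$, $x_1x_2x_3x_4$, $x_1x_2x_3^2$, \dots) and plays the resulting values of $T^{\sigma\tau}=\Tr(A^\sigma A^\tau)$ against trace inequalities such as $T^{22}\ge 2(\beta_1^2+\gamma_1^2+\beta_2^2+\gamma_2^2)$, reaching a contradiction with $\alpha_s\gamma_s\ne0$; the hypothesis $n\ge3$ is used essentially, either to produce a common kernel (forcing $R_X'=0$) or to guarantee a $1\times1$ block alongside the $2\times2$ one. None of this case analysis, nor any substitute for it, appears in your proposal, so the reduction of case (\ref{it:thcod2}) to case (\ref{it:thflat}) is not established.
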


%a modern exposition and
Note that in case~\eqref{it:thcod2}, the normal connection is also flat. For the state of knowledge in the theory of submanifolds of constant curvature in spaces of constant curvature the reader is referred to \cite{DT} (especially to Chapter~5) and references therein.

To put case~\eqref{it:thcod2} in context, we note that the image of the standard Veronese embedding of $\bc P^2$, the ``smallest" $2$-stein space of non-constant curvature, given by $z \mapsto z \otimes \overline{z}^t$ (where $z \in \bc^3$ is a unit vector) lies in the sphere $S^7$, hence having codimension $3$.

\section{Preliminaries}
\label{s:pre}

We denote $\ip$ the inner product on $\tM^{n+p}(c)$ and the induced inner product on $M^n$, and $R$ the curvature tensor of $M^n$. For $x \in M^n$ and $X \in T_xM^n$, denote $R_X$ the corresponding Jacobi operator, so that $R_XY=R(Y,X)X$ for $Y \in T_xM^n$. The $2$-stein condition means that there exist two functions $\mu_1, \mu_2$ on $M^n$ such that $\Tr R_X = \mu_1(x) \|X\|^2, \; \Tr (R_X^2) = \mu_2(x) \|X\|^4$, for all $x \in M^n$ and all $X \in T_xM^n$. Of course, $\mu_1$ is a constant when $n > 2$; moreover, there is a similar Schur-type result for $\mu_2$: on a (connected) $2$-stein manifold $M^n$ of dimension $n > 4$, the function $\mu_2$ is a constant \cite[6.61]{Bes}, \cite[Theorem~2.4]{GSV}; this is not true in dimension $4$: see \cite[Section~2]{GSV} and the construction in \cite{CPS}.

Let $A^\sigma$ be the shape operators relative to an orthonormal basis $\xi^\sigma, \; \sigma=1, \dots, p$, in the normal space at a point $x \in M^n$. From Gauss equation we have
\begin{equation}\label{eq:RX}
  R_X = c (\|X\|^2 \id - X \otimes X^\flat) + \sum_{\sigma=1}^{p} (\<A^\sigma X, X\> A^\sigma - (A^\sigma X) \otimes (A^\sigma X)^\flat).
\end{equation}
Note that adding to an algebraic curvature tensor a constant curvature tensor (which is equivalent to adding to the Jacobi operator a ``constant curvature term" $a (\|X\|^2 \id - X \otimes X^\flat), \; a \in \br$) does not affect the property of being $2$-stein, but only changes the constants. Therefore the fact that $M^n$ is $2$-stein is equivalent to the fact that at every point $x \in M^n$, for some constants $c_1', c_2' \in \br$ we have
\begin{equation}\label{eq:2stdash}
  \Tr R_X' = c_1' \|X\|^2, \qquad \Tr ({R_X'}^2) = c_2' \|X\|^4,  \qquad \text{for all } X \in T_xM^n,
\end{equation}
where $R_X'$, the \emph{extrinsic} Jacobi operator, is given by
\begin{equation}\label{eq:RX'}
    R_X' = \sum_{\sigma=1}^{p} (\<A^\sigma X, X\> A^\sigma - (A^\sigma X) \otimes (A^\sigma X)^\flat).
\end{equation}
Substituting the expression for $R_X'$ from \eqref{eq:RX'} into \eqref{eq:2stdash} we obtain
\begin{gather}\label{eq:1stA}
  \sum_{\sigma=1}^{p} (H^\sigma A^\sigma - (A^\sigma)^2) = c_1' \id, \\
  \begin{multlined}[t]
  \sum_{\sigma,\tau=1}^{p} (\<A^\sigma X, X\> \<A^\tau X, X\> T^{\sigma \tau} + \<A^\sigma X, A^\tau X\>^2 \\
  - \<A^\sigma X, X\> \<A^\tau A^\sigma A^\tau X, X\> - \<A^\tau X, X\> \<A^\sigma A^\tau A^\sigma X, X\>) = c_2' \|X\|^4,  \label{eq:2stA}
  \end{multlined}
\end{gather}
for all $X \in T_xM^n$, where $H^\sigma = \Tr A^\sigma$ and $T^{\sigma \tau}=\Tr(A^\sigma A^\tau)$, for $\sigma, \tau=1, \dots, p$.

Note that if all the shape operators $A^\sigma, \sigma=1, \dots, p$, have a common (nontrivial) kernel, then from \eqref{eq:2stA} we have $c_2'=0$, and hence $R_X'=0$ by \eqref{eq:2stdash}.

\section{Proof of the Theorem}
\label{s:pf}

Suppose the submanifold $M^n \subset \tM^{n+p}(c)$ has flat normal connection. Then all the shape operators at any point $x \in M^n$ are simultaneously diagonalisable relative to some orthonormal basis $e_i, \; i=1, \dots, n$, for $T_xM^n$. Denote $\la_i^\sigma=\<A^\sigma e_i, e_i\>$. Then equations~\eqref{eq:1stA} and \eqref{eq:2stA} give
\begin{gather}\label{eq:flEin}
  \sum_{\sigma=1}^{p} (H^\sigma \la_i^\sigma - (\la_i^\sigma)^2) = c_1', \\
  \sum_{\sigma,\tau=1}^{p} (T^{\sigma \tau} \la_i^\sigma \la_j^\tau + \la_i^\sigma \la_j^\sigma \la_i^\tau \la_j^\tau - \la_i^\sigma \la_j^\sigma (\la_j^\tau)^2 - \la_j^\sigma \la_i^\sigma (\la_i^\tau)^2) = c_2', \label{eq:fl2st}
\end{gather}
for all $i,j=1, \dots, n$. In particular, for $j=i$ we obtain
\begin{equation}\label{eq:fl2stii}
    \sum_{\sigma,\tau=1}^{p} (T^{\sigma \tau} \la_i^\sigma \la_i^\tau - (\la_i^\sigma \la_i^\tau)^2) = c_2'.
\end{equation}
Summing up equations~\eqref{eq:fl2st} by $j$ and using \eqref{eq:flEin} we obtain
\begin{equation*}
    \sum_{\sigma,\tau=1}^{p} (T^{\sigma \tau} - H^\sigma H^\tau) \la_i^\sigma \la_i^\tau + 2 c_1' \sum_{\sigma=1}^{p} H^\sigma \la_i^\sigma = n c_2'.
\end{equation*}
Subtracting equation~\eqref{eq:fl2stii} (and using \eqref{eq:flEin} again) we obtain $(n-1)c_2'=(c_1')^2$, and so from \eqref{eq:2stdash}, by Cauchy-Schwartz inequality, we get $R_X'= c_1' (\|X\|^2 \id - X \otimes X^\flat)$, and hence $R_X = (c+c_1^\prime) (\|X\|^2 \id - X \otimes X^\flat)$. This proves the Theorem in the assumption~\eqref{it:thflat}.

\smallskip

Now suppose $p=2$, as in the assumption~\eqref{it:thcod2} of the Theorem. Denote $Q^\sigma = A^\sigma - \frac12 H^\sigma \id, \; \sigma=1,2$. Then the Einstein condition~\eqref{eq:1stA} gives $(Q^1)^2 + (Q^2)^2 = c_3 \id$, for $c_3=\frac14((H^1)^2 + (H^2)^2)-c_1'$. The following fact of linear algebra is probably known (see e.g. \cite[Section~3]{MdF}, in a different notation).

{
\begin{lemma*}
If two symmetric operators $Q^1, Q^2$ on $\br^n$ satisfy the equation $(Q^1)^2 + (Q^2)^2 = c_3 \id$ for some $c_3 \in \br$, then relative to an orthonormal basis for $\br^n$, the matrices of $Q^1$ and $Q^2$ are simultaneously block-diagonal, with all the diagonal blocks being of sizes either $1 \times 1$ or $2 \times 2$, where the $2 \times 2$ pairs of blocks $Q^1_s, \, Q^2_s, \; s=1, \dots, m$, of $Q^1$ and $Q^2$ respectively, have the form
\begin{equation}\label{eq:22blocks}
  Q^1_s=\left(
          \begin{array}{cc}
            \al_s & 0 \\
            0 & -\al_s \\
          \end{array}
        \right), \quad
  Q^2_s=\left(
          \begin{array}{cc}
            \beta_s & \gamma_s \\
            \gamma_s & -\beta_s \\
          \end{array}
        \right),
\end{equation}
with $\al_s^2+\beta_s^2+\gamma_s^2=c_3$ and $\al_s \gamma_s \ne 0$.
\end{lemma*}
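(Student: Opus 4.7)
The plan is to decompose $\br^n$ orthogonally into common $\{Q^1,Q^2\}$-invariant subspaces of dimension $1$ or $2$ realizing the stated normal form. Throughout I exploit that, since $Q^1,Q^2$ are symmetric, the orthogonal complement of a common invariant subspace is again invariant, so all reductions can be performed orthogonally.

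First, because $(Q^1)^2=c_3\id-(Q^2)^2$, the symmetric operator $(Q^1)^2$ commutes with both $Q^i$; I split $\br^n$ orthogonally into its eigenspaces and pass to one of them, call it $V$. On $V$ we have $(Q^1|_V)^2=\mu^2\id$ and $(Q^2|_V)^2=(c_3-\mu^2)\id=:\nu^2\id$ with $\mu,\nu\ge 0$ (nonnegativity since a squared symmetric operator is nonnegative). If $\mu=0$ or $\nu=0$, one of the operators vanishes on $V$ and diagonalising the other produces only $1\times 1$ blocks, so I may assume $\mu,\nu>0$.

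The key algebraic observation is to introduce $P:=Q^1Q^2+Q^2Q^1$. On $V$, since $(Q^i)^2$ is scalar for each $i$, direct computation gives $[Q^i,P]|_V=(Q^i)^2Q^j-Q^j(Q^i)^2=0$ for $\{i,j\}=\{1,2\}$. Thus $P|_V$ is symmetric and lies in the centraliser of $\{Q^1|_V,Q^2|_V\}$, so its eigenspaces provide a further orthogonal decomposition of $V$ into common invariants, and I restrict to a summand on which $P=p\,\id$ for some $p\in\br$. Set $Q^{2\prime}:=Q^2-\tfrac{p}{2\mu^2}Q^1$; a direct computation yields $Q^1Q^{2\prime}+Q^{2\prime}Q^1=0$ and $(Q^{2\prime})^2=(\nu^2-\tfrac{p^2}{4\mu^2})\id=:\nu^{\prime\,2}\id$ with $\nu^{\prime\,2}\ge 0$. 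If $\nu'=0$ then $Q^{2\prime}=0$, so $Q^2$ is proportional to $Q^1$ on the current summand and I again get only $1\times 1$ blocks.

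In the remaining case $\nu'>0$, I split $V=V^+\oplus V^-$ into the $\pm\mu$-eigenspaces of $Q^1$. Anticommutation forces $Q^{2\prime}$ to interchange $V^+$ and $V^-$, so in an orthonormal basis respecting the splitting it has the symmetric block form $\bigl(\begin{smallmatrix}0&C\\C^t&0\end{smallmatrix}\bigr)$, and $(Q^{2\prime})^2=\nu^{\prime\,2}\id$ forces $CC^t=C^tC=\nu^{\prime\,2}I$. Hence $\dim V^+=\dim V^-$ and $C/\nu'$ is orthogonal; applying independent orthogonal changes of basis in $V^\pm$ (which preserve the $\pm\mu I$ form of $Q^1$) via the singular value decomposition of $C$ reduces $C$ to $\nu'I$. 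Reordering the basis to pair up corresponding vectors from $V^\pm$ gives the desired decomposition into $2\times 2$ blocks with $\alpha=\mu$, $\beta=p/(2\mu)$, $\gamma=\nu'$; the identity $\alpha^2+\beta^2+\gamma^2=\mu^2+\nu^2=c_3$ and $\alpha\gamma\ne 0$ then follow directly. The main obstacle is the introduction of the auxiliary operator $P$: once one recognises that $Q^1Q^2+Q^2Q^1$ is central on each $(Q^1)^2$-eigenspace, everything reduces to a standard Clifford-type normal form which I expect will go through mechanically.
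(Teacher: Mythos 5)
Your proof is correct, and all the computations check out: the anticommutator $P=Q^1Q^2+Q^2Q^1$ does commute with both operators on each eigenspace of $(Q^1)^2$, the shifted operator $Q^{2\prime}=Q^2-\tfrac{p}{2\mu^2}Q^1$ does anticommute with $Q^1$ and has scalar square, and the final bookkeeping ($\al=\mu$, $\be=p/(2\mu)$, $\gamma=\nu'$, $\al^2+\be^2+\gamma^2=\mu^2+\nu^2=c_3$, $\al\gamma\ne0$) is right. The route differs from the paper's in how the second refinement of the splitting is obtained. The paper diagonalises $Q^1$, writes the corresponding block of $Q^2$ as $\left(\begin{smallmatrix} D_1 & T \\ T^t & D_2\end{smallmatrix}\right)$, diagonalises $D_1,D_2$, and extracts the relation $D_1T+TD_2=0$ by squaring; this relation is what pairs the $\be$-eigenvectors of $D_1$ with the $(-\be)$-eigenvectors of $D_2$. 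You instead observe that $P$ is central on each eigenspace of $(Q^1)^2$ and split by its eigenvalues; since $P=\left(\begin{smallmatrix} 2\mu D_1 & 0 \\ 0 & -2\mu D_2\end{smallmatrix}\right)$ in the paper's notation, your eigenvalue $p=2\mu\be$ produces exactly the same refinement, but derived conceptually rather than by matrix manipulation. Both arguments then finish identically, with $CC^t=C^tC=\nu'^2 I$ (the paper's $NN^t=c_4I_a$) and a singular value/polar decomposition. What your version buys is a cleaner, Clifford-algebra-flavoured organisation — the reduction to an anticommuting pair with scalar squares is a recognisable normal-form problem — at the cost of introducing the auxiliary operator $P$ and the shift, which the paper avoids by working directly with block entries. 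Neither argument is more general than the other; they are two packagings of the same decomposition.
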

\begin{proof}
  Diagonalise $Q^1$. As $Q^2$ and $(Q^1)^2$ commute, $Q^1$ and $Q^2$ simultaneously split into block-diagonal forms, with the corresponding pairs of blocks of the form $\left(\begin{smallmatrix} \al I_p & 0 \\ 0 & -\al I_q \end{smallmatrix}\right)$ and $\left(\begin{smallmatrix} D_1 & T \\ T^t & D_2 \end{smallmatrix}\right)$ respectively, where $T$ is a $p \times q$ matrix, and where different blocks have different $\al^2$. If $\al=0$ or $pq=0$, we can diagonalise the second block. Otherwise, we can separately diagonalise $D_1$ and $D_2$. We have $D_1T+TD_2=0$, which implies that we can further simultaneously split our pairs of blocks into smaller pairs of blocks, each of the form $\left(\begin{smallmatrix} \al I_a & 0 \\ 0 & -\al I_b \end{smallmatrix}\right)$ and $\left(\begin{smallmatrix} \beta I_a & N \\ N^t & -\beta I_b \end{smallmatrix}\right)$ for $Q^1$ and $Q^2$ respectively (where again we can assume $ab \ne 0$). Then $NN^t= c_4 I_a, \; N^tN= c_4 I_b$, where $c_4= c_3-\al^2-\beta^2$. If $c_4=0$, both blocks are diagonal. If not, we must have $a=b$ (if say $a>b$, then $\rk NN^t < a$), and we can diagonalise $N$ by the polar decomposition, without changing any other entries of the blocks. Rearranging the rows and the columns we get $a$ pairs of $2 \times 2$ blocks of the required form~\eqref{eq:22blocks}. If $\al_s \gamma_s = 0$, the blocks $Q^1_s$ and $Q^2_s$ can be simultaneously diagonalised.
\end{proof}
}

We show that, in fact, $m=0$ in the notation of the Lemma (so that there are no $2\times 2$ blocks); then the claim follows from assertion~\eqref{it:thflat}.

Assume there exists at least one pair of $2 \times 2$ blocks, and that an orthonormal basis in the Lemma is chosen in such a way that the corresponding blocks are in the top-left corners (the corresponding invariant subspace of both $A^1$ and $A^2$ is $\Span(e_1,e_2)$). We can further assume that $H^2=0$ by rotating the orthonormal basis $\xi^1, \xi^2$ in the normal space (before applying the Lemma).

% 22block.mw
We first assume that $H^1 \ne 0$. Taking $X=(x_1,x_2, 0, \dots, 0)^t$ in \eqref{eq:2stA} and equating the coefficients of $x_1^3x_2$ and $x_1x_2^3$ on the left-hand side to zero we obtain $T^{12}=2\alpha\beta$ and $\beta \gamma(2\be^2 + 2\gamma^2 - T^{22})=0$. Equating the coefficients of $x_1^4$ and $x_2^4$ to one another we find $T^{11}=2\alpha^2 + \frac{1}{2}(H^1)^2$, and then equating them to a half of the coefficient of $x_1^2x_2^2$ we obtain
$(\gamma^2-\be^2)(2\be^2 + 2\gamma^2 - T^{22})=0$. It follows that $T^{22}=2\be^2 + 2\gamma^2$. As $T^{22}=\Tr((A^2)^2)$, all the entries of $A^2$
outside the top-left $2\times2$ block must be zero. The same is true for $A^1$, as $T^{11}=2\alpha^2 + \frac{1}{2}(H^1)^2$. But $n \ge 3$, and so $A^1$ and $A^2$ have a common kernel, hence $R'_X=0$ (see the end of Section~\ref{s:pre}).

Next suppose $H^1=0$ (below, we give a direct algebraic proof; alternatively, one can apply the result of \cite{Mat} and then \cite[Lemma~2.1]{CGW}). We have $A^\sigma=Q^\sigma$ for $\sigma=1,2$, and by rotating the basis $\xi^1, \xi^2$ in the normal space we can assume that $T^{12}=0$.

% 2222zerotrace.mw
Suppose we have at least two pairs of $2\times2$ blocks of the form \eqref{eq:22blocks}. Up to relabelling we can take $s=1,2$, and assume that the blocks with $s=1$ and $s=2$ correspond to the invariant subspaces $\Span(e_1,e_2)$ and $\Span(e_3,e_4)$ respectively. We also have $\gamma_1 \gamma_2 \ne 0$ by the Lemma. Taking $X=(x_1,x_2, x_3,x_4, 0, \dots, 0)^t$ in \eqref{eq:2stA} and equating the coefficient of $x_1x_2x_3x_4$ on the left-hand side to zero we obtain $T^{22}+
\al_1^2 - \be_1^2 - \gamma_1^2 + \al_2^2 - \be_2^2 - \gamma_2^2=0$. But $T^{22}= \Tr((A^2)^2) \ge 2(\be_1^2+\be_2^2+\gamma_1^2+\gamma_2^2)$, a contradiction.

% 2211zerotrace.mw
The last remaining case is when we have exactly one pair of $2\times2$ blocks of the form \eqref{eq:22blocks}. Then, as $n \ge 3$, there is at least one $1\times1$ block, and so up to relabelling we can assume that the top-left $3\times3$ corners of the matrices of $A^1$ and $A^2$ have the form
\begin{equation}\label{eq:33blocks}
  \left(
          \begin{array}{ccc}
            \al & 0 & 0\\
            0 & -\al & 0 \\
            0 & 0 & \la
          \end{array}
        \right), \quad
  \left(
          \begin{array}{ccc}
            \beta & \gamma & 0 \\
            \gamma & -\beta & 0 \\
            0 & 0 & \mu
          \end{array}
        \right),
\end{equation}
respectively. Note that $\al \gamma \ne 0$ by the Lemma and that $\al^2+\be^2+\gamma^2=\la^2+\mu^2$ by \eqref{eq:1stA}. Take $X=(x_1,x_2, x_3,0, \dots, 0)^t$ in \eqref{eq:2stA}. Equating the coefficient of $x_1^3x_2$ on the left-hand side to zero we get $4\be(2\alpha^2 + 2\beta^2 + 2\gamma^2 - T^{22})=0$. If $\be \ne 0$, we get $T^{22}=2\alpha^2 + 2\beta^2 + 2\gamma^2$, and then $T^{11}=2\alpha^2 + 2\beta^2 + 2\gamma^2$ by equating the coefficient of $x_1^4$ to a half of the coefficient of $x_1^2x_2^2$. But then equating the coefficients of $x_1^4$ and $x_3^4$ and using the fact that $\al^2+\be^2+\gamma^2= \la^2 + \mu^2$ we get $\al \gamma = 0$, a contradiction. Suppose $\be = 0$. Equating the coefficient of $x_1x_2x_3^2$ to zero we obtain
$\mu(T^{22} + \alpha^2 - \gamma^2 - \lambda^2 - \mu^2)=0$. But $\la^2+\mu^2=\al^2+\gamma^2$ and $T^{22} \ge 2\gamma^2 + \mu^2$, so $\mu = 0$. Then $\la^2=\al^2+\gamma^2$. Comparing the coefficients of $x_1^4$ and $x_3^4$ we get $T^{11}=4\al^2+2\gamma^2$, while comparing the coefficients of $x_1^2x_3^2$ and $x_2^2x_3^2$ we obtain $T^{11}=2\al^2$, a contradiction.


\begin{thebibliography}{CGW}

\bibitem[BTV]{BTV}
J.\,Berndt, F.\,Tricerri, L.\,Vanhecke, \emph{Generalized Heisenberg groups and Damek-Ricci harmonic spaces}. Lecture Notes in Mathematics, 1598. Springer-Verlag, Berlin, 1995.

\bibitem[Bes]{Bes}
A.\,Besse, \emph{Manifolds all of whose geodesics are closed}, Springer-Verlag, 1978.

\bibitem[CGW]{CGW}
P.\,Carpenter, A.\,Gray, T.J.\,Willmore, \emph{The curvature of Einstein symmetric spaces}, Quart. J. Math. Oxford (2), \textbf{33} (1982), 45--64.

\bibitem[CPS]{CPS}
S.\,H. Chun, J. H.\,Park, K.\,Sekigawa, \emph{$H$-contact unit tangent sphere bundles of Einstein manifolds}, Q. J. Math. \textbf{62} (2011), 59--69.

\bibitem[DT]{DT}
M.\,Dajczer, R.\,Tojeiro, \emph{Submanifold Theory Beyond an Introduction}. Universitext, Springer, 2019.

\bibitem[DOV]{DOV}
M. Dajczer, C.-R. Onti, and Th. Vlachos, \emph{A class of Einstein submanifolds of Euclidean space}, arXiv preprint arXiv:2103.00224 (2021).

\bibitem[Fia]{Fia}
A.\,Fialkow, \emph{Hypersurfaces of a Space of Constant Curvature}, Ann. of Math. \textbf{39} (1938), 762--785.

\bibitem[GSV]{GSV}
P.\,Gilkey, A.\,Swann, L.\,Vanhecke, \emph{Isoparametric geodesic spheres and a conjecture of Osserman concerning the Jacobi operator},
Quart. J. Math. Oxford Ser. (2)  \textbf{46} (1995), 299--320.

\bibitem[MdF]{MdF}
G.\,Machado de Freitas, \emph{Submanifolds with homothetic Gauss map in codimension two}, Geom. Dedicata \textbf{180} (2016), 151--170.

\bibitem[Mat]{Mat}
Y.\,Matsuyama, \emph{Minimal Einstein submanifolds with codimension two}, Tensor (N.S.) \textbf{52} (1993), 61--68.

\bibitem[Nik]{Nik}
Y.\,Nikolayevsky, \emph{Two theorems on harmonic manifolds}, Comment. Math. Helv. \textbf{80} (2015), 29--50.

\bibitem[NP]{NP}
Y.\,Nikolayevsky, J. H.\,Park, \emph{Einstein hypersurfaces of irreducible symmetric spaces}, in preparation.

\end{thebibliography}
\end{document}